\newtheorem{teo}{Theorem}[section]
\newtheorem{prop}[teo]{Proposition}
\newtheorem{lem}[teo]{Lemma}
\newtheorem{coro}[teo]{Corollary}
\newtheorem{defi}[teo]{Definitions}
\newtheorem{rem}[teo]{Remark}
\newtheorem{ejem}[teo]{Example}
\begin{document}

\title{\vspace*{0cm}Inequalities related to Bourin and Heinz means with a complex parameter\footnote{2000 MSC.  Primary 15A45, 47A30;  Secondary 15A42, 47A63.}}





\date{}
\author{T. Bottazzi, R. Elencwajg, G. Larotonda and A. Varela
\footnote{All authors supported by Instituto Argentino de Matem\'atica, CONICET and Universidad Nacional de General Sarmiento.}}

\maketitle

\setlength{\parindent}{0cm} 

\begin{abstract}
A conjecture posed by S. Hayajneh and F. Kittaneh claims that given $A,B$ positive matrices, $0\le t\le 1$, and any unitarily invariant norm it holds
$$|||A^tB^{1-t}+B^tA^{1-t}|||\leq|||A^tB^{1-t}+A^{1-t}B^t|||.
$$
Recently, R. Bhatia proved the inequality for the case of the Frobenius norm and for $t \in[\frac14;\frac34]$. In this paper, using complex methods we extend this result to complex values of the parameter $t=z$ in the strip $\{z \in\mathbb C: {\rm Re}(z) \in [\frac14; \frac34]\}$. We give an elementary proof of the fact that equality holds for some $z$ in the strip if and only if $A$ and $B$ commute. We also show a counterexample to the general conjecture by exhibiting a pair of positive matrices such that the claim does not hold for the uniform norm. Finally, we give a counterexample for a related singular value inequality given by $s_j(A^tB^{1-t}+B^tA^{1-t})\leq s_j(A+B)$, answering in the negative a question made by K. Audenaert and F. Kittaneh.\footnote{{\bf Keywords and phrases:} Frobenius norm, Heinz mean, matrix inequality, matrix power, positive matrix, trace inequality, unitarily invariant norm.}
\end{abstract}

\section{Introduction}\label{intro1}

We begin this paper with some notations and definitions. The context here is the algebra of $n\times n$ complex entries matrices, but the proofs adapt well to other (infinite dimensional) settings in operator theory, so let us assume that $\mathcal A$ stands for an operator algebra with trace, for instance ${\mathcal A}=M_n(\mathbb C)$ with its usual trace, or $\mathcal A=B_2(H)$, the Hilbert-Schmidt operators acting on a separable complex Hilbert space with the infinite trace, or ${\mathcal A}=({\mathcal A},Tr)$ a $C^*$-algebra with a finite faithful trace.

\begin{defi} Let  $|||\cdot|||$ denote an unitarily invariant norm on $\mathcal A$, which we assume is equivalent to a symmetric norm, that is
$$
|||XYZ||||\le \|X\|_{\infty}|||Y|||\|Z\|_{\infty}
$$
whenever $Y\in \mathcal A$ (from now on $\|\cdot\|_{\infty}$ will denote the norm of the operator algebra). 

\medskip

For convenience we will use the notation $\tau(X)=Re\; Tr(X)$. Let $|X|=\sqrt{X^*X}$ stand for the modulus of the matrix or operator $X$, then the (right) polar decomposition of $X$ is given by $X=U|X|$ where $U$ is a unitary such that $U$ maps $Ran|X|$ into $Ran(X)$ and is the identity on $Ran|X|^{\perp}=Ker(X)$. Note that $\|X\|_2^2=Tr(X^*X)=Tr[|X|^2]$.
\end{defi}

\bigskip

Consider the inequality
\begin{equation}\label{ftrazaC}
\tau(A^zB^zA^{1-z}B^{1-z}) \le \tau(AB),
\end{equation}
for positive invertible operators $A,B>0$ in $\mathcal A$, and $z\in \mathbb C$. We introduce some notation regarding vertical strips in the complex plane: let
$$
\mathcal S_0=\{z\in\mathbb C:0\le {\rm Re}(z)\le 1\},\qquad \mathcal S_{1/4}=\{z\in\mathbb C:1/4\le {\rm Re}(z)\le 3/4\};
$$ 
we will study the validity of (\ref{ftrazaC}) in both $S_0$ and $S_{1/4}$.

\medskip

Intimately related to the expression above are the inequalities
\begin{eqnarray}\label{bvsh}
|||b_t(A,B)||||\le |||h_t(A,B)|||
\end{eqnarray}
and
\begin{eqnarray}\label{b}
|||b_t(A,B)|||\le |||A+B|||,
\end{eqnarray}
for positive matrices $A,B\ge 0$ in $\mathcal A$, where 
$$
b_t(A,B)=A^tB^{1-t}+B^tA^{1-t}\quad t\in [0,1];
$$
the name $b_t$ is due to Bourin, who conjectured inequality (\ref{b}) for $n\times n$ matrices in \cite{bourin}, and 
$$
h_t(A,B)=A^tB^{1-t}+A^{1-t}B^t\quad t\in [0,1]
$$
is named after Heinz, and the well-known \cite{heinz} inequality
$$
|||h_t(A,B)|||\le |||A+B|||
$$
carrying his name. 

\bigskip

Recently, S. Hayajneh and F. Kittanneh proposed in \cite{kitta} that the stronger (\ref{bvsh}) should also be valid in $M_n(\mathbb C)$; however, numerical computations (see Section \ref{contrajemplos}) show that, at least for \textit{the uniform norm}, this is false.

\bigskip

If we focus on the case $|||X|||=\|X\|_2=Tr(X^*X)^{1/2}$ (the Frobenius norm in the case of $n\times n$ matrices) and we write $h_t=h_t(A,B)$, $b_t=b_t(A,B)$, then 
\begin{eqnarray}
Tr|b_t|^2 &= &\tau(b_t^*b_t)=\tau(B^{1-t}A^t+A^{1-t}B^t)(A^tB^{1-t}+B^tA^{1-t})\nonumber\\
& = &\tau(B^{2(1-t)}A^{2t})+ \tau(A^{2(1-t)}B^{2t})+2\tau(A^tB^tA^{1-t}B^{1-t})\nonumber
\end{eqnarray}
where we have repeatedly used the ciclicity of $\tau$ (i.e. $\tau(XY)=\tau(YX)$) and the fact that $\tau(Z^*)=\tau(Z)$. Likewise
$$
Tr|h_t|^2 =\tau(B^{2(1-t)}A^{2t})+ \tau(A^{2(1-t)}B^{2t})+2\tau(AB).
$$

Thus, proving that $\|b_t\|_2\le \|h_t\|_2$ amounts to prove that
\begin{equation}\label{ftraza}
\tau(A^tB^tA^{1-t}B^{1-t}) \le \tau(AB),
\end{equation}
and in fact, it is clear that both inequalities are equivalent -as remarked in \cite{kitta}-. 

\section{Main results}

We will divide the problem in regions of the plane (or the line), and then we will also consider the possiblity of attaining the equality; we will see that this is only possible in the trivial case, i.e. when $A,B$ commute. We recall the generalized H\"older inequality, that we will use frequently: let $\frac{1}{p}+\frac{1}{q}+\frac{1}{r}=1$ for $p,q,r\ge 1$ and $X,Y,Z$ in ${\mathcal A}$, then
$$
\tau(XYZ)\le\|XYZ\|_1\le \|X\|_p\|Y\|_q\|Z\|_r.
$$
This is just a combination of the usual H\"older inequality together with
$$
\|XY\|_s\le \|X\|_p\|Y\|_q
$$
provided $s\ge 1$ and $\frac{1}{p}+\frac{1}{q}=\frac{1}{s}$ (see \cite{simon}, Theorem 2.8, for more details).

\subsection{The inequality in the strip ${\mathcal S}_{1/4}$}

We begin with an easy consequence of an inequality due to Araki-Lieb and Thirring.
\begin{lem}\label{arak}
If $A,B\ge 0$ and $r\ge 2$, then
$$
\|A^{1/r}B^{1/r}\|_r\le \tau(AB)^{1/r}.
$$
\end{lem}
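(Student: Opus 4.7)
The plan is to reduce the stated inequality to a direct application of the Araki--Lieb--Thirring inequality, in the form: for $P,Q\ge 0$ and $q\ge 1$,
$$
\mathrm{Tr}\bigl((QPQ)^q\bigr)\le \mathrm{Tr}\bigl(Q^q P^q Q^q\bigr).
$$

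First I would rewrite the left-hand side of the desired inequality in a form suited to this tool. Raising to the $r$-th power and using the definition of the Schatten $r$-norm,
$$
\|A^{1/r}B^{1/r}\|_r^{\,r}=\mathrm{Tr}\bigl(|A^{1/r}B^{1/r}|^{r}\bigr)=\mathrm{Tr}\Bigl(\bigl(B^{1/r}A^{2/r}B^{1/r}\bigr)^{r/2}\Bigr),
$$
where I used $|X|^2=X^*X$ applied to $X=A^{1/r}B^{1/r}$ and the fact that $B^{1/r}A^{2/r}B^{1/r}\ge 0$, so that arbitrary real powers are well defined by the continuous functional calculus.

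The main step is then to set $P:=A^{2/r}$, $Q:=B^{1/r}$ and $q:=r/2$; the hypothesis $r\ge 2$ is exactly what guarantees $q\ge 1$, which is the range in which Araki--Lieb--Thirring applies. Plugging in gives
$$
\mathrm{Tr}\Bigl(\bigl(B^{1/r}A^{2/r}B^{1/r}\bigr)^{r/2}\Bigr)\le \mathrm{Tr}\bigl(B^{1/2}\,A\,B^{1/2}\bigr)=\mathrm{Tr}(AB),
$$
the last equality by cyclicity of the trace. Since $A,B\ge 0$, the operator $B^{1/2}AB^{1/2}$ is positive, so $\mathrm{Tr}(AB)$ is a nonnegative real number and hence coincides with $\tau(AB)=\mathrm{Re}\,\mathrm{Tr}(AB)$. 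Taking $r$-th roots concludes the argument.

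I do not expect any genuine obstacle here: the content is really just bookkeeping to recognize the left-hand side as a trace of the form $\mathrm{Tr}((QPQ)^q)$ with $q\ge 1$. The only point that requires a moment of care is checking that the exponent condition $r\ge 2$ in the hypothesis corresponds precisely to the exponent $r/2\ge 1$ needed in Araki--Lieb--Thirring, and that $\tau$ and $\mathrm{Tr}$ agree on the positive quantity $AB$ (in the sense that $\mathrm{Tr}(AB)\in\mathbb{R}_{\ge 0}$), so that the passage from $\mathrm{Tr}(AB)$ to $\tau(AB)$ is legitimate in the setting of the paper.
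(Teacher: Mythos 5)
Your proof is correct and follows essentially the same route as the paper: rewrite $\|A^{1/r}B^{1/r}\|_r^r$ as the trace of an $r/2$ power of a sandwiched positive operator and apply Araki--Lieb--Thirring with exponent $r/2\ge 1$ (the paper works with $XX^*$ where you use $X^*X$, which is immaterial since $\|X\|_r=\|X^*\|_r$). No issues.
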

\begin{proof}
Note that 
$$
\|A^{1/r}B^{1/r}\|_r^r=\tau([A^{1/r}B^{1/r}B^{1/r}A^{1/r}]^{r/2})=\tau([A^{1/r}B^{2/r}A^{1/r}]^{r/2})
$$
which, by the inequality of Araki-Lieb and Thierring (see \cite{araki}, and note that $r/2\ge 1$) is less or equal than
$$
\tau(A^{r/2r}B^{r2/2r}A^{r/2r})=\tau(A^{1/2}BA^{1/2}),
$$
which in turn equals $\tau(AB)$.
\end{proof}

Note that if we exchange the variables $z\mapsto 1-z$ and exchange the role of $A,B$, it suffices to consider half-strips or half-intervals around ${\rm Re}(z)=1/2$.

\bigskip

\begin{prop}\label{desistrip}
If $0 < A,B$ and $z\in \mathcal S_{1/4}$, then
$$
\tau(A^zB^zA^{1-z}B^{1-z}) \le \tau(AB).
$$
\end{prop}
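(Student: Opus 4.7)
The natural route is via the Hadamard three lines theorem applied to the holomorphic function
$$F(z) = {\rm Tr}(A^z B^z A^{1-z} B^{1-z}),$$
which is defined and holomorphic on an open neighborhood of $\mathcal S_{1/4}$ (since $A, B$ are positive invertible) and bounded on the closure. Observing that $\tau(A^z B^z A^{1-z} B^{1-z}) = {\rm Re}\, F(z) \le |F(z)|$, it is enough to prove the stronger inequality $|F(z)| \le \tau(AB)$ throughout $\mathcal S_{1/4}$; by the three lines theorem (equivalently, Phragm\'en--Lindel\"of applied to the subharmonic function $\log |F|$), this reduces to bounding $|F|$ by $\tau(AB)$ on the two vertical boundary lines ${\rm Re}(z) = 1/4$ and ${\rm Re}(z) = 3/4$. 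These two bounds are equivalent: the change of variables $z \mapsto 1-z$ combined with the swap $A \leftrightarrow B$ leaves both $|F|$ and $\tau(AB)$ invariant (using cyclicity of $\tau$), so one may focus on ${\rm Re}(z) = 1/4$.

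For $z = 1/4 + iy$, I would split
$$F(z) = {\rm Tr}\bigl((A^z B^z)(A^{1-z} B^{1-z})\bigr)$$
and apply H\"older's inequality with conjugate exponents $p = 4$ and $q = 4/3$. The central observation is that $A^{iy}, B^{iy}$ are unitaries that commute with $A, B$ respectively, so a direct computation gives $|A^z B^z|^2 = B^{-iy}(B^{1/4} A^{1/2} B^{1/4}) B^{iy}$, a unitary conjugate of $|A^{1/4} B^{1/4}|^2$; hence $\|A^z B^z\|_p = \|A^{1/4} B^{1/4}\|_p$ for every Schatten exponent $p$, and analogously $\|A^{1-z} B^{1-z}\|_q = \|A^{3/4} B^{3/4}\|_q$. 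Lemma \ref{arak} with $r = 4$ controls the first factor, $\|A^{1/4} B^{1/4}\|_4 \le \tau(AB)^{1/4}$.

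The main obstacle will be the companion bound $\|A^{3/4} B^{3/4}\|_{4/3} \le \tau(AB)^{3/4}$ required to match the H\"older exponents: a direct attempt via Araki--Lieb--Thirring on $\|A^{3/4} B^{3/4}\|_{4/3}^{4/3} = \tau((B^{3/4} A^{3/2} B^{3/4})^{2/3})$ produces the \emph{reverse} inequality, because the outer exponent $2/3$ is less than $1$ so the classical monotonicity flips. To close this gap I would instead bring in an auxiliary three lines argument on the smaller substrip $\{1/4 \le {\rm Re}(z) \le 1/2\}$, using the direct Cauchy--Schwarz estimate $|F(1/2 + iy)| \le \|A^{1/2+iy} B^{1/2+iy}\|_2 \cdot \|A^{1/2-iy} B^{1/2-iy}\|_2 = \tau(AB)$ at the midpoint (both 2-norms collapse to $\tau(AB)^{1/2}$ via $\|A^{1/2+iy} B^{1/2+iy}\|_2^2 = \tau(B^{1/2-iy} A B^{1/2+iy}) = \tau(AB)$). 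The technical heart of the proof will be to choose the right H\"older splitting, or the right iterated interpolation, so that the midpoint Cauchy--Schwarz bound combines with Lemma \ref{arak} to collapse to exactly $\tau(AB)$ on the boundary of $\mathcal S_{1/4}$.
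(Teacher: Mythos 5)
Your overall strategy (holomorphy of $F$, reduction to the boundary lines via the three lines theorem, the symmetry $z\mapsto 1-z$ combined with $A\leftrightarrow B$, Cauchy--Schwarz on the central line, and Lemma \ref{arak} for the $4$-norm factor) is exactly the paper's, and your diagnosis of the obstacle is accurate: the companion bound $\|A^{3/4}B^{3/4}\|_{4/3}\le\tau(AB)^{3/4}$ is not just hard to prove, it is false in general, since Araki--Lieb--Thirring with outer exponent $2/3<1$ gives $\|A^{3/4}B^{3/4}\|_{4/3}^{4/3}=\tau((B^{3/4}A^{3/2}B^{3/4})^{2/3})\ge\tau(AB)$. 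But your proposed repair does not work, and the proof is left with a genuine gap. The three lines theorem on the substrip $\{1/4\le{\rm Re}(z)\le 1/2\}$ requires a bound on \emph{both} boundary lines as input; your Cauchy--Schwarz estimate supplies only the line ${\rm Re}(z)=1/2$, and the line ${\rm Re}(z)=1/4$ is precisely what you are trying to establish. You cannot run the interpolation backwards to deduce the boundary bound from the central one, and your closing sentence (``the technical heart of the proof will be to choose the right H\"older splitting'') is an acknowledgement that the key step is missing rather than a proof of it.

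The missing idea is to abandon the two-factor split $(A^zB^z)(A^{1-z}B^{1-z})$ on the line ${\rm Re}(z)=1/4$ and use instead the \emph{three}-factor generalized H\"older inequality with exponents $(4,4,2)$, which is admissible since $\frac14+\frac14+\frac12=1$. Writing $z=1/4+iy$ and using cyclicity of the trace,
$$
\tau(A^zB^zA^{1-z}B^{1-z})=\tau\bigl((B^{1/4}A^{1/4}A^{iy}B^{iy})\,(B^{1/4}A^{1/4}A^{-iy})\,(A^{1/2}B^{1/2}B^{-iy})\bigr)
\le \|B^{1/4}A^{1/4}\|_4^2\,\|A^{1/2}B^{1/2}\|_2,
$$
because the trailing imaginary powers are unitary and do not change the Schatten norms. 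Now all three factors have exponent $r\ge 2$, so Lemma \ref{arak} applies in the correct direction to each, giving $\tau(AB)^{1/4+1/4+1/2}=\tau(AB)$. With this boundary estimate in hand, your three lines argument on $\{1/4\le{\rm Re}(z)\le 1/2\}$ (now with valid bounds on both lines) and the symmetry you already described complete the proof.
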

\begin{proof}
Let $z=1/2+iy$, $y\in\mathbb R$ denote any point in vertical line of the complex plane passing through $x=1/2$. Then
\begin{eqnarray}
\tau(A^zB^zA^{1-z}B^{1-z})&=  & \tau(A^{iy}A^{1/2}B^{1/2}B^{iy}A^{-iy}A^{1/2}B^{1/2}B^{-iy})  \nonumber\\
&\le & \tau |A^{iy}A^{1/2}B^{1/2}B^{iy}A^{-iy}A^{1/2}B^{1/2}B^{-iy}|\nonumber\\
&\le & \|A^{iy}A^{1/2}B^{1/2}B^{iy}A^{-iy}\|_2\|A^{1/2}B^{1/2}B^{-iy}\|_2=\|A^{1/2}B^{1/2}\|_2^2\nonumber
\end{eqnarray}
by the Cauchy-Schwarz inequality and the fact that $A^{iy},B^{iy}$ are unitary operators. Then by the previous lemma,
$$
\tau(A^zB^zA^{1-z}B^{1-z})\le \tau(AB)^{2/2}=\tau(AB).
$$
Now consider $z=1/4+iy$, $y\in\mathbb R$, a generic point in the vertical line over $x=1/4$, then noting that $\frac{1}{4}+\frac{1}{4}+\frac{1}{2}=1$,
\begin{eqnarray}
\tau(A^zB^zA^{1-z}B^{1-z})&=&\tau( B^{1/4}A^{1/4}A^{iy}B^{iy}B^{1/4}A^{1/4}A^{-iy}A^{1/2}B^{1/2}B^{-iy})\nonumber\\
&\le & \|B^{1/4}A^{1/4}\|_4^2\|B^{1/2}A^{1/2}\|_2\le \tau(AB)^{2/4+1/2}=\tau(AB)\nonumber,
\end{eqnarray}
where we used again the previous Lemma and the generalized H\"older's inequality,
$$
\tau(XYZ)\le \|X\|_{p}\|Y\|_q\|Z\|_r
$$
whenever $p,q,r\ge 1$ and $\frac{1}{p}+\frac{1}{q}+\frac{1}{r}=1$.

By Hadamard's three-lines theorem, the bound $\tau(AB)$ is valid in the vertical strip $1/4\le {\rm Re}(z)\le 1/2$, since it holds in the frontier of the strip. Invoking the symmetry $z\mapsto 1-z$ and exchanging the roles of $A,B$ gives the desired bound on the full strip ${\mathcal S}_{1/4}=\{1/4\le {\rm Re}(z)\le 3/4\}$.
\end{proof}

Regarding the inequalities conjectured by Bourin et al., note that we can assume $A,B>0$: replacing $A$ with $A_{\varepsilon}=A+\varepsilon$ (and likewise with $B$), if the inequality (\ref{ftrazaC}) is valid for $A_{\varepsilon},B_{\varepsilon}$ then making $\varepsilon\to 0^+$ gives the general result: the following result that we state as corollary was recently obtained by R. Bhatia in \cite{bhatia} and we should also point the reader to the paper by T. Ando, F. Hiai, K. Okubo \cite{ando}.

\medskip

\begin{coro}\label{tbvsh}
For any $A,B\ge 0$ and any $t\in [1/4,3/4]$, 
$$
\|A^tB^{1-t}+B^tA^{1-t}\|_2\le \|A^tB^{1-t}+A^{1-t}B^t\|_2\le\|A+B\|_2.
$$
\end{coro}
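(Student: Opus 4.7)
The plan is to deduce both inequalities as rather direct consequences of Proposition \ref{desistrip} and the classical Heinz inequality in the Frobenius norm. First, as remarked just before the statement, we may assume $A,B>0$ are invertible: replacing $A$ and $B$ by $A+\varepsilon$ and $B+\varepsilon$ and letting $\varepsilon\to 0^+$, both sides of the chain depend continuously on $(A,B)$ in $\|\cdot\|_2$ (by operator-monotonicity of $x\mapsto x^r$ for $r\in[0,1]$, or simply by continuity of the functional calculus on the spectra), so the general case follows from the strictly positive one.

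For the left inequality, I would quote the computation already made in Section \ref{intro1}: expanding and using cyclicity of $\tau$,
\begin{eqnarray*}
\|h_t\|_2^2 - \|b_t\|_2^2 &=& 2\tau(AB) - 2\tau(A^tB^tA^{1-t}B^{1-t}).
\end{eqnarray*}
Hence $\|b_t\|_2\le \|h_t\|_2$ is equivalent to inequality (\ref{ftraza}), which is precisely the real case $z=t\in[1/4,3/4]$ of Proposition \ref{desistrip}. This settles the first inequality for all $t\in[1/4,3/4]$.

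For the right inequality, the cleanest way is to invoke the classical Heinz inequality $|||h_t(A,B)|||\le |||A+B|||$ (valid for every unitarily invariant norm and $t\in[0,1]$) applied to the Frobenius norm. If one prefers a self-contained argument using only the tools available in this excerpt, I would expand
$$
\|A+B\|_2^2-\|h_t\|_2^2 = \tau(A^2)+\tau(B^2)-\tau(A^{2t}B^{2(1-t)})-\tau(A^{2(1-t)}B^{2t}),
$$
and then apply Lemma \ref{arak} together with the generalized Hölder inequality (with exponents $p=1/t$, $q=1/(1-t)$, which are $\ge 1$ for $t\in[0,1]$) to obtain $\tau(A^{2t}B^{2(1-t)})\le t\,\tau(A^2)+(1-t)\,\tau(B^2)$ via Young's inequality for scalars; the symmetric bound for the other term adds up to $\tau(A^2)+\tau(B^2)$, finishing the argument.

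Since the bulk of the work is already carried by Proposition \ref{desistrip}, I do not expect real difficulty here; the only mildly subtle point is recognizing the algebraic identity relating $\|h_t\|_2^2-\|b_t\|_2^2$ to $\tau(AB)-\tau(A^tB^tA^{1-t}B^{1-t})$, which however is already displayed verbatim in the introduction.
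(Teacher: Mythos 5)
Your proof is correct and follows essentially the route the paper intends for this corollary: reduce to $A,B>0$ by the $\varepsilon$-perturbation remark, obtain the left inequality from the equivalence with (\ref{ftraza}) displayed in the introduction together with the real case of Proposition \ref{desistrip}, and obtain the right inequality from the Heinz inequality (your alternative self-contained Hölder--Young argument for that step is also valid, though Lemma \ref{arak} is not actually needed there).
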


\subsection{Inequality becomes equality}

Let us consider the special case when the inequality above becomes an equality. We begin with a lemma that we will use in several ocasions, and will be useful when we drop the assumption on nonsingularity of $A,B$.

\begin{lem}\label{conmu}
Let $A,B\ge 0$, and assume 
$$
\tau(A^{1/2}B^{1/2}A^{1/2}B^{1/2})=\tau(AB),
$$
or
$$
\|A^{1/4}B^{1/4}\|_4=\tau(AB)^{1/4}.
$$
In either case, $A$ commutes with $B$. 
\end{lem}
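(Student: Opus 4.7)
The plan is to observe that the two displayed hypotheses are in fact the same condition, and then to reduce the problem to showing that a single operator is self-adjoint. For the first step, setting $Y=A^{1/4}B^{1/4}$ one has $|Y|^2=Y^*Y=B^{1/4}A^{1/2}B^{1/4}$, so cycling inside the trace gives
$$
\|A^{1/4}B^{1/4}\|_4^4=\tau(|Y|^4)=\tau\bigl(B^{1/4}A^{1/2}B^{1/2}A^{1/2}B^{1/4}\bigr)=\tau\bigl(A^{1/2}B^{1/2}A^{1/2}B^{1/2}\bigr).
$$
Thus each of the two hypotheses is equivalent to $\tau((A^{1/2}B^{1/2})^2)=\tau(AB)$, and a single argument will handle both cases.

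I then set $X=A^{1/2}B^{1/2}$; since $X^*X=B^{1/2}AB^{1/2}$ has trace $\tau(AB)$, the hypothesis reads simply $\tau(X^2)=\tau(X^*X)$. Writing $X=H+iK$ with self-adjoint $H=(X+X^*)/2$ and $K=(X-X^*)/(2i)$, a short expansion gives
$$
X^*X-X^2 \;=\; 2K^2-2i\,KH.
$$
The imaginary cross-term vanishes on taking real parts of the trace, since $(KH)^*=HK$ together with cyclicity forces $\mathrm{Tr}(KH)\in\mathbb{R}$. Hence the hypothesis becomes
$$
0 \;=\; \tau(X^*X)-\tau(X^2) \;=\; 2\tau(K^2) \;=\; 2\|K\|_2^2,
$$
which forces $K=0$, i.e.\ $X=X^*$. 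Therefore $A^{1/2}B^{1/2}=B^{1/2}A^{1/2}$, and squaring yields $AB=BA$.

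The proof is brief, and the only point requiring care is verifying that the cross-term $\mathrm{Re}\,\mathrm{Tr}(iKH)$ vanishes; this is the single place where self-adjointness of both $H$ and $K$ enters, and it is precisely what promotes the identity $\tau(X^2)=\tau(X^*X)$ into the stronger Pythagorean-type identity $\|K\|_2^2=0$. Everything else is routine manipulation of traces of positive operators and their square roots.
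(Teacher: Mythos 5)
Your proof is correct, and the main step is genuinely different from the paper's. Both arguments begin identically, by cycling the trace to show that the second hypothesis $\|A^{1/4}B^{1/4}\|_4=\tau(AB)^{1/4}$ is just a rewriting of the first, and both set $X=A^{1/2}B^{1/2}$ so that the hypothesis reads $\tau(X^2)=\tau(X^*X)$. The paper then interprets this as the equality case of the Cauchy--Schwarz inequality $\langle X,X^*\rangle\le\|X\|_2\|X^*\|_2$ for the Hilbert--Schmidt inner product, deducing $X=\lambda X^*$ with $\lambda>0$ and then $\lambda=1$ because $\|X\|_2=\|X^*\|_2$. You instead use the Cartesian decomposition $X=H+iK$ and compute the defect directly: $\tau(X^*X)-\tau(X^2)=2\tau(K^2)=\tfrac12\|X-X^*\|_2^2$, so the hypothesis forces $K=0$. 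Your route is slightly more self-contained (it avoids invoking the equality case of Cauchy--Schwarz, which needs its own justification) and it yields a quantitative identity for the gap $\tau(AB)-\tau\bigl((A^{1/2}B^{1/2})^2\bigr)$, which the paper's argument does not; both routes rely equally on faithfulness of the trace on positive elements to conclude $K=0$ (respectively, to use the equality case of Cauchy--Schwarz). One small imprecision at the end: ``squaring'' $A^{1/2}B^{1/2}=B^{1/2}A^{1/2}$ does not literally give $AB=BA$, since $(A^{1/2}B^{1/2})^2\ne AB$ in general; the correct (and standard) step is that $A^{1/2}$ and $B^{1/2}$ commute, hence so do $A=(A^{1/2})^2$ and $B=(B^{1/2})^2$ --- the paper leaves this step equally implicit.
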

\begin{proof}
Name $X=A^{1/2}B^{1/2}$, and considering the inner product induced by $\tau$, $\langle X,Y\rangle =\tau(XY^*)$,
$$
\langle X,X^*\rangle=\tau(X^2)=\tau(A^{1/2}B^{1/2}A^{1/2}B^{1/2})=\tau(AB)=\tau(X^*X)=\|X\|_2^2=\|X\|_2\|X^*\|_2.
$$
But Cauchy-Schwarz inequality becomes an equality if and only if $X=\lambda X^*$ for some $\lambda >0$, and since both operators have equal norm ($=\|A^{1/2}B^{1/2}\|_2$), then $X=X^*$. This means 
$$
A^{1/2}B^{1/2}=B^{1/2}A^{1/2},
$$
and this implies that $A$ commutes with $B$. On the other hand, 
$$
\|A^{1/4}B^{1/4}\|_4^4=\tau((B^{1/4}A^{1/2}B^{1/4})^2)=\tau(A^{1/2}B^{1/2}A^{1/2}B^{1/2}),
$$
so what we have is just another way of writing the first equality condition.
\end{proof}

\begin{prop}\label{igualdad}
Let $A,B>0$ and assume that there is $z_0\in {\mathcal S}_{1/4}$ such that 
$$
\tau(A^{z_0}B^{z_0}A^{1-z_0}B^{1-z_0})=\tau(AB).
$$
Then $A$ commutes with $B$ and $\tau(A^{z}B^{z}A^{1-z}B^{1-z})=\tau(AB)$ for any $z\in \mathbb C$.
\end{prop}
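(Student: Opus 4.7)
The plan is to pass to the entire function
$$g(z)=\mathrm{Tr}(A^zB^zA^{1-z}B^{1-z})$$
and extract rigidity from the fact that the hypothesis places $|g|$ at its maximum over the strip $\mathcal S_{1/4}$. A careful reading of the proof of Proposition \ref{desistrip} shows that the H\"older/Cauchy--Schwarz bounds used on the lines $\mathrm{Re}(z)=1/4$ and $\mathrm{Re}(z)=1/2$ actually control $|\mathrm{Tr}(\,\cdot\,)|$ rather than only its real part (since $|\mathrm{Tr}(M)|\le \|M\|_1$), so Hadamard's three-lines theorem yields $|g(z)|\le\tau(AB)$ throughout $\mathcal S_{1/4}$. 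The hypothesis then reads
$$\tau(AB)=\mathrm{Re}\, g(z_0)\le |g(z_0)|\le \tau(AB),$$
so $g(z_0)=\tau(AB)$: a positive real number equal to the supremum of $|g|$ on the strip.

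If $z_0$ is in the open strip $1/4<\mathrm{Re}(z)<3/4$, the maximum modulus principle forces $g\equiv\tau(AB)$ on that strip, and hence, since $g$ is entire, on all of $\mathbb C$ by analytic continuation. Specialising to $z=1/2$ produces $\tau(A^{1/2}B^{1/2}A^{1/2}B^{1/2})=\tau(AB)$ (both sides being real by cyclicity), and Lemma \ref{conmu} yields $AB=BA$.

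If instead $z_0$ sits on the boundary, say $\mathrm{Re}(z_0)=1/4$ (the case $\mathrm{Re}(z_0)=3/4$ reducing by the symmetry $z\mapsto 1-z$, $A\leftrightarrow B$), I retrace the chain of inequalities on that boundary line from Proposition \ref{desistrip},
$$\tau(AB)=|g(z_0)|\le \|B^{1/4}A^{1/4}\|_4^{\,2}\,\|A^{1/2}B^{1/2}\|_2\le \tau(AB)^{1/2}\tau(AB)^{1/2}=\tau(AB).$$
Both applications of Lemma \ref{arak} must then be tight; in particular $\|A^{1/4}B^{1/4}\|_4=\tau(AB)^{1/4}$, and the second half of Lemma \ref{conmu} again yields $AB=BA$.

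Finally, once $A$ and $B$ commute their complex powers commute for every $z\in\mathbb C$, so $A^zB^zA^{1-z}B^{1-z}=A^{z+(1-z)}B^{z+(1-z)}=AB$ and the desired equality $\tau(A^zB^zA^{1-z}B^{1-z})=\tau(AB)$ is automatic on the whole plane. The main obstacle is the boundary case, where the maximum modulus principle does not apply directly at $z_0$; I handle it by the explicit equality analysis in the H\"older/Araki--Lieb--Thirring chain, feeding into Lemma \ref{conmu} rather than into complex-analytic rigidity.
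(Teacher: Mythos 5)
Your proof is correct, and it diverges from the paper's in an instructive way on the boundary case. For $z_0$ in the open strip the two arguments coincide (maximum modulus, then equality at $z=1/2$, then Lemma \ref{conmu}); your version is in fact slightly more careful, since you work with the genuinely holomorphic function $\mathrm{Tr}(A^zB^zA^{1-z}B^{1-z})$ and first upgrade the hypothesis $\mathrm{Re}\,g(z_0)=\tau(AB)$ to $|g(z_0)|=\tau(AB)$, whereas the paper applies ``the maximum modulus principle'' directly to $\mathrm{Re}\,\mathrm{Tr}$, which is harmonic rather than holomorphic. On the boundary line $\mathrm{Re}(z_0)=1/4$ the paper takes a longer road: it extracts $X=\lambda Y$ from the equality case of Cauchy--Schwarz in the inner product $\langle X,Y\rangle=\tau(XY^*)$, cancels $B^{1/4}$ and $A^{1/4}$ on either side, shows $\lambda=1$ by comparing norms, arrives at $A^{1/4+iy}B^{1/4+iy}=B^{1/4+iy}A^{1/4+iy}$, and then invokes analytic logarithms to recover $AB=BA$. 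You instead observe that in the chain
$$
\tau(AB)=|g(z_0)|\le \|B^{1/4}A^{1/4}\|_4^{\,2}\,\|A^{1/2}B^{1/2}\|_2\le \tau(AB)^{1/2}\,\tau(AB)^{1/2}
$$
both factors are individually bounded by $\tau(AB)^{1/2}>0$, so equality of the product forces equality of each factor, and $\|A^{1/4}B^{1/4}\|_4=\tau(AB)^{1/4}$ is precisely the second hypothesis of Lemma \ref{conmu}. This is shorter, avoids the cancellation and the analytic-logarithm step, and leans more heavily on the equivalence already built into Lemma \ref{conmu}; the paper's route, by contrast, yields the stronger intermediate fact that the ``rotated'' powers $A^{1/4+iy}$ and $B^{1/4+iy}$ themselves commute. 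Your closing remark that commutativity of $A,B$ gives $A^zB^zA^{1-z}B^{1-z}=AB$ for every $z$ correctly disposes of the final claim, which the paper leaves implicit.
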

\begin{proof}
First consider the case when equality is reached in an interior point of the strip ${\mathcal S}_{1/4}$. Note that by the maximum modulus principle, this would mean that the function 
$$f(z)=\tau(A^zB^zA^{1-z}B^{1-z})$$ is constant in the strip ${\mathcal S}_{1/4}$, in particular equality holds at $z_0=1/2$, and by the previous Lemma, $A$ commutes with $B$.

Now suppose equality is attained in the frontier, for instance at $z_0=1/4+iy$ for some $y\in\mathbb R$. Let $X=B^{1/4}A^{1/4}A^{iy}B^{iy}B^{1/4}A^{1/4}$, $Y=B^{1/2}B^{iy}A^{iy}A^{1/2}$. Then, if we go through the proof of Proposition \ref{desistrip} again, assuming equality
\begin{eqnarray}
\tau(AB)&=  & \tau(XY^*)=\langle X,Y\rangle \le \|X\|_2\|Y\|_2\nonumber\\
& \le& \|B^{1/4}A^{1/4}\|_4^2\|A^{1/2}B^{1/2}\|_2  \le \tau(AB).
\end{eqnarray}
Arguing as in the previous Lemma, there exists $\lambda>0$ such that $X=\lambda Y$,
$$
B^{1/4}A^{1/4}A^{iy}B^{iy}B^{1/4}A^{1/4}=\lambda B^{1/2}B^{iy}A^{iy}A^{1/2}.
$$
Cancelling $B^{1/4}$ on the left and $A^{1/4}$ on the right we obtain
$$
A^{1/4}A^{iy}B^{iy}B^{1/4}=\lambda B^{1/4}B^{iy}A^{iy}A^{1/4},
$$
but now both elements have the same norm and this shows that $\lambda=1$; then
$$
A^{1/4+iy}B^{1/4+iy}= B^{1/4+iy}A^{1/4+iy},
$$
and since $A,B>0$, the existence of analytic logarithms shows that again $A$ commutes with $B$. By symmetry, the same argument applies for any $z_0=3/4+iy$ in the other border of the strip.
\end{proof}

\begin{coro}
If $A$ does not commute with $B$, the inequality is strict:
$$
\tau(A^zB^tA^{1-z}B^{1-z}) < \tau(AB),
$$
in some open set $\Omega\subset \mathbb C$ containing the closed strip ${\mathcal S}_{1/4}$.
\end{coro}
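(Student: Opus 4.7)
The plan is to extract the corollary almost directly from Proposition \ref{igualdad} combined with a continuity (in fact holomorphy) argument to promote strict inequality from the closed strip to an open neighborhood.

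First, I would set $f(z)=\tau(A^zB^zA^{1-z}B^{1-z})$. Because $A,B>0$, the analytic functional calculus provides $A^z=\exp(z\log A)$ and $B^z=\exp(z\log B)$, which are entire ${\mathcal A}$-valued functions of $z$. Hence $g(z)=Tr(A^zB^zA^{1-z}B^{1-z})$ is an entire function on $\mathbb C$, and $f(z)={\rm Re}\,g(z)$ is real-analytic (in particular continuous) on all of $\mathbb C$, not merely on ${\mathcal S}_{1/4}$.

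Next I would invoke Proposition \ref{igualdad} in contrapositive form: since $A$ and $B$ do not commute, $f(z_0)\neq \tau(AB)$ for every $z_0\in{\mathcal S}_{1/4}$. Combined with Proposition \ref{desistrip}, this forces
\begin{equation*}
f(z)<\tau(AB)\qquad\text{for every }z\in{\mathcal S}_{1/4}.
\end{equation*}
Therefore the set $\Omega=\{z\in\mathbb C : f(z)<\tau(AB)\}=f^{-1}\bigl((-\infty,\tau(AB))\bigr)$ is open by continuity of $f$, and contains the closed strip ${\mathcal S}_{1/4}$ by the strict inequality just obtained. This is the $\Omega$ demanded by the statement.

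There is really no hard step here: once Proposition \ref{igualdad} is in hand, the only thing to check is that $f$ is well-defined and continuous on a neighborhood of ${\mathcal S}_{1/4}$ (rather than only on the strip itself). The mildest subtlety is verifying the global holomorphy of $z\mapsto A^z,B^z$, which holds because $A,B$ are positive invertible so their logarithms are bounded selfadjoint operators and the exponential series converges entire in $z$.
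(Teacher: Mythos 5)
Your proof is correct and is essentially the argument the paper intends: the corollary is stated without proof as an immediate consequence of Propositions \ref{desistrip} and \ref{igualdad}, with strict inequality on the closed strip following from the contrapositive of Proposition \ref{igualdad}, and the extension to an open set $\Omega$ following from continuity of $f(z)={\rm Re}\, Tr(A^zB^zA^{1-z}B^{1-z})$ on all of $\mathbb C$ (which holds since $A,B>0$ makes $z\mapsto A^z, B^z$ entire). Your choice $\Omega=f^{-1}\bigl((-\infty,\tau(AB))\bigr)$ satisfies exactly what the statement asks for.
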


\bigskip

If we allow $A,B$ to be non invertible, holomorphy is lost, but nevertheless in the same spirit we have the following result.
\begin{prop}\label{delta}
For given $A,B\ge 0$, there exists $\delta=\delta(A,B)>0$ such that
$$
\tau(A^{t}B^{t}A^{1-t}B^{1-t})\le \tau(AB)
$$
holds in the interval $[1/4-\delta,3/4+\delta]$. If $A$ does not commute with $B$, the inequality is strict in the whole $(1/4-\delta,3/4+\delta)$.
\end{prop}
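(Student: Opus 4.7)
The plan is to separate the commuting case (trivial) from the non-commuting case, prove the weak inequality on $[1/4,3/4]$ by a perturbation argument, upgrade it to a strict inequality on the same interval, and extend the strict bound to a slightly larger interval by continuity of the real function $g(t)=\tau(A^tB^tA^{1-t}B^{1-t})$ on $(0,1)$.

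First I would dispose of the weak inequality on $[1/4,3/4]$ by perturbation: replace $A,B\ge 0$ with $A_\varepsilon=A+\varepsilon I$, $B_\varepsilon=B+\varepsilon I$, apply Proposition~\ref{desistrip} at the real point $z=t$, and let $\varepsilon\to 0^+$, using operator-norm continuity of the functional calculus at a fixed $t>0$ together with continuity of $\tau$. If $[A,B]=0$ then $A^tB^tA^{1-t}B^{1-t}=AB$ for every $t$, both sides already equal $\tau(AB)$, and any $\delta>0$ works; from here on I assume $A$ and $B$ do not commute.

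The heart of the argument is to upgrade the weak inequality to $g(t)<\tau(AB)$ throughout $[1/4,3/4]$. At a boundary point $t_0\in\{1/4,3/4\}$ I would trace through the chain of inequalities in the proof of Proposition~\ref{desistrip}: an equality $g(t_0)=\tau(AB)$ forces equality at every step, in particular in the Araki--Lieb--Thirring estimate of Lemma~\ref{arak}, yielding $\|A^{1/4}B^{1/4}\|_4=\tau(AB)^{1/4}$; Lemma~\ref{conmu} (whose proof uses only the Cauchy--Schwarz inequality in the Hilbert--Schmidt inner product and is therefore valid for $A,B\ge 0$) then forces $[A,B]=0$, a contradiction. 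For an interior $t_0\in(1/4,3/4)$ I would extend $g$ to the holomorphic function
\[
F(z)=Tr(A^zB^zA^{1-z}B^{1-z})
\]
on the open strip $\{0<{\rm Re}(z)<1\}$. For $A\ge 0$ and ${\rm Re}(z)>0$ the operator $A^z$ is defined by functional calculus as zero on $\ker A$ and $\lambda^z$ on the complement, and is holomorphic in $z$; this makes $F$ holomorphic on the open strip, with $F_\varepsilon\to F$ uniformly on compact subsets. The estimates of Proposition~\ref{desistrip} in the sharper form $|Tr(X)|\le \|X\|_1$ give $|F(z)|\le \tau(AB)$ on $\mathcal S_{1/4}$, first for $A_\varepsilon,B_\varepsilon>0$ and then in the limit. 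From $g(t_0)=\tau(AB)$ one deduces $|F(t_0)|\ge {\rm Re}\,F(t_0)=g(t_0)=\tau(AB)$, so $|F|$ attains its supremum at the interior point $t_0$; the maximum modulus principle forces $F$ to be constant on the strip, so $F(1/2)=\tau(A^{1/2}B^{1/2}A^{1/2}B^{1/2})=\tau(AB)$, and Lemma~\ref{conmu} again yields $[A,B]=0$, a contradiction.

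Once strict inequality is established on $[1/4,3/4]$, the $\delta$-extension is routine: by compactness $c:=\tau(AB)-\max_{t\in[1/4,3/4]}g(t)>0$, and continuity of $g$ on $(0,1)$ at the two endpoints produces $\delta=\delta(A,B)>0$ such that $g(t)<\tau(AB)-c/2$ on $(1/4-\delta,3/4+\delta)$; the weak inequality on the closure follows by continuity. I expect the principal obstacle to be the interior case, specifically verifying carefully that $F(z)$ is genuinely holomorphic on the open strip for merely nonnegative $A,B$ and that the bound $|F(z)|\le\tau(AB)$ survives the limit $\varepsilon\to 0^+$ uniformly on compacta, so that the maximum modulus argument can be invoked on $F$ itself rather than only on its approximations.
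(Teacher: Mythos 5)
Your argument is correct, but the way you handle the interior points $t_0\in(1/4,3/4)$ is genuinely different from the paper's. The paper deliberately avoids complex analysis here (it remarks just before the proposition that ``holomorphy is lost'' for non-invertible $A,B$) and instead, for each real $t\in(1/4,1/2)$, factors the product as $XY$ with $X=B^{1/4}A^{1/4}A^{t-1/4}B^{t-1/4}$ and $Y=B^{1/4}A^{1/4}A^{3/4-t}B^{3/4-t}$, applies the generalized H\"older inequality with exponents $4,p,q,4$ where $1/p=t-1/4$, $1/q=3/4-t$, and then invokes Lemma~\ref{arak} four times; equality again isolates the factor $\|A^{1/4}B^{1/4}\|_4=\tau(AB)^{1/4}$ and Lemma~\ref{conmu} finishes. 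This is more elementary, re-proves the real-parameter inequality from scratch for $A,B\ge0$, and yields the stronger trace-norm estimate recorded in the Remark following the proof. You instead rescue the maximum-modulus argument of Proposition~\ref{igualdad} by observing that, with the convention $0^z=0$, the map $z\mapsto A^z$ is still holomorphic (for matrices it is a finite sum $\sum_{\lambda_i>0}\lambda_i^zP_i$, hence entire), so $F$ is holomorphic on the open strip and inherits the bound $|F|\le\tau(AB)$ from the regularizations $F_\varepsilon$; this is valid in the matrix setting and shorter once holomorphy is granted, though the paper's caution is warranted in the $B_2(H)$ setting, where membership of $A^zB^zA^{1-z}B^{1-z}$ in the trace class and the local uniform convergence $F_\varepsilon\to F$ require an extra Schatten--H\"older check. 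Your treatment of the boundary points via Lemma~\ref{arak} and Lemma~\ref{conmu} is actually cleaner than the paper's appeal to Proposition~\ref{igualdad}, whose cancellation of $B^{1/4}$ and $A^{1/4}$ presupposes invertibility; and your $\delta$-extension by compactness and continuity coincides with the paper's.
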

\begin{proof}
If $A$ commutes with $B$, then the assertion is trivial. If not, arguing as in the last part of the proof of the previous proposition, we must have strict inequality
$$
\tau(A^{t}B^{t}A^{1-t}B^{1-t})<\tau(AB)
$$ 
for $t=1/4$, $t=3/4$, and then by continuity the inequality extends a bit out of the closed interval $[1/4,3/4]$. 

Consider $t\in (1/4,1/2)$ and put $X=B^{1/4}A^{1/4}A^{t-1/4}B^{t-1/4}$, $Y=B^{1/4}A^{1/4}A^{3/4-t}B^{3/4-t}$. Note that $\frac{1}{t},\frac{1}{1-t}\ge 1$ and define $1/p=t-1/4\in (0,1/4)$, $1/q=3/4-t\in (1/4,1/2)$, note also that $1/p+1/4=t$, $1/q+1/4=1-t$. By reiterated use of H\"older's inequality compute
\begin{eqnarray}
\tau(A^tB^tA^{1-t}B^{1-t})&\le & \|XY\|_1\le \|X\|_{t^{-1}}\|Y\|_{(1-t)^{-1}}\nonumber\\
&\le & \|B^{1/4}A^{1/4}\|_4 \|A^{1/p}B^{1/p}\|_p\|B^{1/q}A^{1/q}\|_q \|A^{1/4}B^{1/4}\|_4.\nonumber
\end{eqnarray}
Now apply Lemma \ref{arak} to each of the four terms (note that $p>4$ and $q>2)$, and we have\footnote{Note that this is another proof of the inequality for real $t\in [\frac14,\frac34]$.}
$$
\tau(A^tB^tA^{1-t}B^{1-t})\le \|B^{1/4}A^{1/4}\|_4 \|A^{1/p}B^{1/p}\|_p\|B^{1/q}A^{1/q}\|_q \|A^{1/4}B^{1/4}\|_4\le\tau(AB).
$$
If we assume equality of the traces, then
$$
\tau(AB)=\|B^{1/4}A^{1/4}\|_4 \|A^{1/p}B^{1/p}\|_p\|B^{1/q}A^{1/q}\|_q \|A^{1/4}B^{1/4}\|_4
$$
and in particular, it must be that $\|A^{1/4}B^{1/4}\|_4=\tau(AB)^{1/4}$, and from Lemma \ref{conmu} we can deduce that $A$ commutes with $B$. By the symmetry $(t\mapsto 1-t)$ the argument extends to $(1/2,3/4)$, and again by Lemma \ref{conmu} we already know that $A$ commutes with $B$ if equality is attained at $t=1/2$. This finishes the proof of the assertion that the inequality is strict in $[1/4,3/4]$ unless $A$ commutes with $B$.
\end{proof}

\begin{rem}
The inequalities in the previous proof give in fact
$$
\tau(|B^{\frac14}A^tB^tA^{1-t}B^{\frac34 -t}|)\le Tr(AB)
$$
for any $t\in [\frac14,\frac34]$; this is a particular instance of \cite[Theorem 2.10]{ando}.
\end{rem}

\section{Counterexamples}\label{contrajemplos}

In this section we  exhibit specific cases of different kind. In Example \ref{ex1} we choose $A,B$ such that $\|b_t(A,B)\|_\infty>\|h_t(A,B)\|_\infty$, while in Example \ref{ex2}, it is shown that the $j^{\hbox{th}}$ singular value of $A+B$ is not always greater than the $j^{\hbox{th}}$ singular value of $b_t(A,B)$. This provides negative answers to \cite[Conjecture 1.2]{kitta} and \cite[Problem 4]{audenart-kittaneh} respectively.

\begin{ejem} \label{ex1}

Consider the following positive definite matrices 
$$
A=\left(
\begin{array}{ccc}
 1141 & 0 & 0 \\
 0 & 204 & 0 \\
 0 & 0 & 1/8
\end{array}
\right) \qquad \hbox{ and } \qquad 
 B= \left(
\begin{array}{ccc}
 39 & 90 & 43 \\
 90 & 418 & 370 \\
 43 & 370 & 426
\end{array}
\right). 
$$
 
The following is the graph of $f(t)= -\|b_t(A,B)\|_\infty+\|h_t(A,B)\|_\infty$ for $t\in[0,\frac12]$:

\centerline{\includegraphics{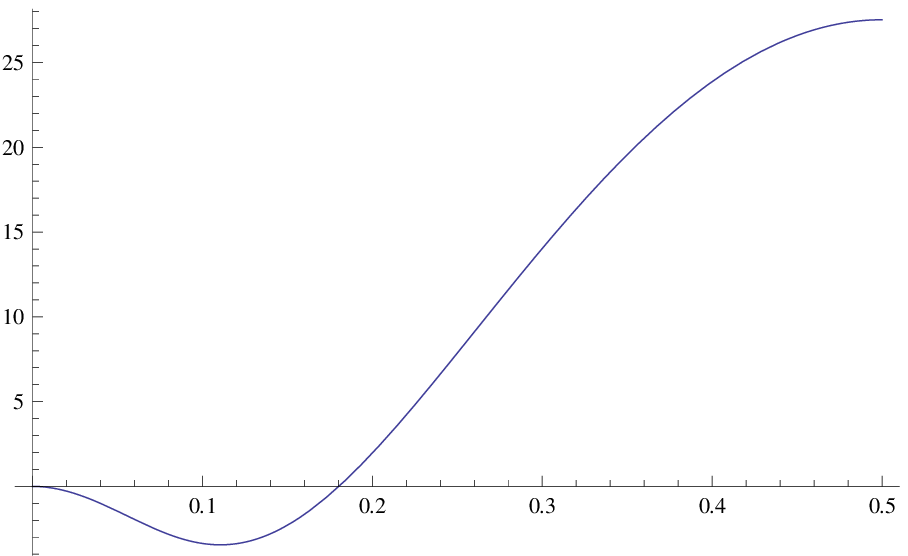}}

For these matrices $-\|b_t(A,B)\|_\infty+\|h_t(A,B)\|_\infty\simeq -2.3$ at $t=.15$.
\end{ejem}
 
In \cite[Problem 4]{audenart-kittaneh} K. Audenaert and F. Kittaneh asked if $s_j(b_t(A,B))\leq s_j(A+B)$ for every $j$ and $0<t<1$ (where $s_j(M)$, $j=1\dots n$ denote the singular values of the matrix $M$ arranged in non-increasing order).

\begin{ejem} \label{ex2}
Consider the following positive definite matrices 
  $$
  A= \left(
\begin{array}{ccc}
 6317 & 0 & 0 \\
 0 & 474 & 0 \\
 0 & 0 & 6
\end{array}
\right)
 \quad and  \quad 
B=\left(
\begin{array}{ccc}
 2078 & 2362 & 2199 \\
 2362 & 3267 & 2585 \\
 2199 & 2585 & 2492
\end{array}
\right).$$

Then, for $t=\frac12$ we have
$$s(b_{\frac{1}{2}}(A,B))=(6826.57,\ 878.499, \ 591.716)$$
and
$$s(A+B)=(10561.4,\ 3629.62,\ 443.017).$$

In particular, $s_3(b_{\frac12}(A,B))>s_3(A+B)$.
\end{ejem}

\bigskip


\begin{thebibliography}{XX}

\bibitem{ando} T. Ando, F. Hiai, K. Okubo. \textit{Trace inequalities for multiple products of two matrices}. Math. Inequal. Appl. 3 (2000), no. 3, 307-318. 


\bibitem{araki} H. Araki. \textit{On an inequality of Lieb and Thirring}, Lett. Math. Phys. 19 (1990), pp. 167-170.

\bibitem{audenart-kittaneh} K. Audenaert, F. Kittaneh. \textit{Problems and Conjectures in Matrix and Operator Inequalities}, eprint arXiv:1201.5232v3 [math.FA]

\bibitem{bhatia} R. Bhatia. \textit{Trace inequalities for products of positive definite matrices}, J. Math. Phys. 55 (2014).


\bibitem{bourin} J.C. Bourin. \textit{Matrix subadditivity inequalities and block-matrices}. Internat. J. Math. 20 (2009), no. 6, 679--691.

\bibitem{kitta} S. Hayajneh, F. Kittaneh. \textit{Lieb-Thirring trace inequalities and a question of Bourin}. J. Math. Phys. 54 (2013), no. 3, 033504, 8 pp.

\bibitem{heinz} E. Heinz. \textit{Beiträge zur Störungstheorie der Spektralzerlegung}. (German) Math. Ann. 123, (1951). 415-438.

\bibitem{simon} B. Simon. \textit{Trace ideals and their applications}. Second edition. Mathematical Surveys and Monographs, 120. American Mathematical Society, Providence, RI, 2005.

\end{thebibliography}
\end{document}